\documentclass[12pt, reqno]{amsart}
\usepackage{amsmath, amsthm, amscd, amsfonts, amssymb, graphicx, color}
\usepackage[bookmarksnumbered, colorlinks, plainpages]{hyperref}
\hypersetup{colorlinks=true,linkcolor=red, anchorcolor=green,
citecolor=cyan, urlcolor=red, filecolor=magenta, pdftoolbar=true}

\textheight 22.5truecm \textwidth 14.5truecm
\setlength{\oddsidemargin}{0.35in}\setlength{\evensidemargin}{0.35in}
\setlength{\topmargin}{-.5cm}

\newtheorem{theorem}{Theorem}[section]

\newtheorem{proposition}[theorem]{Proposition}
\newtheorem{corollary}[theorem]{Corollary}
\theoremstyle{definition}

\theoremstyle{remark}
\newtheorem{remark}[theorem]{Remark}
\numberwithin{equation}{section}

\begin{document}

\setcounter{page}{1}

\title[Numerical radius inequalities concerning with algebraic norms]
{Numerical radius inequalities concerning with algebraic norms}

\author[A. Zamani, M.S. Moslehian, Q. Xu \MakeLowercase{and} C. Fu]
{A. Zamani$^1$, M. S. Moslehian$^2$, Q. Xu$^3$ \MakeLowercase{and} C. Fu$^4$}

\address{$^1$Department of Mathematics, Farhangian University, Tehran, Iran}
\email{zamani.ali85@yahoo.com}

\address{$^2$Department of Pure Mathematics, Ferdowsi University of Mashhad,
Center of Excellence in Analysis on Algebraic Structures (CEAAS),
P. O. Box 1159, Mashhad 91775, Iran}
\email{moslehian@um.ac.ir, moslehian@yahoo.com}

\address{$^3$Department of Mathematics, Shanghai Normal University, Shanghai 200234, PR China}
\email{qingxiang\_xu@126.com}

\address{$^4$Department of Mathematics, Shanghai Normal University, Shanghai 200234, PR China}
\email{fchlixue@163.com}

\subjclass[2010]{Primary 47A12; Secondary 47A30.}

\keywords{Numerical radius, usual operator norm, inequality, Cartesian decomposition.}
\begin{abstract}
We give an expression for a generalized numerical radius of Hilbert space operators
and then apply it to obtain upper and lower bounds for the generalized numerical radius.
We also establish some generalized numerical radius inequalities involving the product of two operators.
Applications of our inequalities are also provided.
\end{abstract} \maketitle
\section{Introduction and preliminaries}

Let $(\mathcal{H}, \langle \cdot,  \cdot\rangle)$ be a complex Hilbert space
and let $\mathbb{B}(\mathcal{H})$ be the algebra of all bounded linear operators on $\mathcal{H}$.
For $T\in\mathbb{B}(\mathcal{H})$, let $\|T\|=\sup\{\|Tx\|: \|x\| = 1\}$ and
$w(T)= \sup\{|\langle Tx,  x\rangle|: \|x\| = 1\}$
denote the usual operator norm and the numerical radius of $T$, respectively.
It is easy to see that $w(\cdot)$ defines a norm on $\mathbb{B}(\mathcal{H})$,
which is equivalent to the usual operator norm $\|\cdot\|$.
Namely, for $T\in \mathbb{B}(\mathcal{H})$, we have
\begin{align}\label{111}
\frac{1}{2}\|T\| \leq w(T) \leq \|T\|.
\end{align}
The first inequality becomes equality if $T$ is square-zero (i.e., $T^2 = 0$)
and the second inequality becomes equality if $T$ is normal (see, e.g., \cite{G.R}).

Over the years, double inequality (\ref{111}) has been improved to various sharp inequalities.
For example, Kittaneh \cite{K} refined the right-hand side of (\ref{111}) by proving that
\begin{align}\label{222}
w(T) \leq \frac{\sqrt{2}}{2}\sqrt{\|TT^* + T^*T\|}.
\end{align}
In another vein, Dragomir \cite{D} used Buzano inequality to improve the right-hand side of (\ref{111}) by showing that
\begin{align}\label{333}
w(T) \leq \frac{\sqrt{2}}{2}\sqrt{\|T\|^2 + w(T^2)}.
\end{align}
Some other interesting numerical radius inequalities improving inequalities (\ref{111})
can be found in \cite{G.R, H.K.S.2, S.M.Y, SDM, Z.1, Z.2}.

Every operator $T\in \mathbb{B}(\mathcal{H})$ can be represented as $T =\,{\rm Re}(T) + i\,{\rm Im}(T)$,
the Cartesian decomposition, where ${\rm Re}(T)= \frac{T+T^*}{2}$ and ${\rm Im}(T)= \frac{T-T^*}{2i}$
are the real and imaginary parts of $T$, respectively.
It is well-known (see, e.g., \cite{Y}) that
\begin{align*}
w(T) = \displaystyle{\sup_{\theta \in \mathbb{R}}}\big\|{\rm Re}(e^{i\theta}T)\big\|.
\end{align*}
Also, it has been shown in \cite{K.M.Y} that for $\alpha, \beta \in \mathbb{R}$,
\begin{align}\label{0101}
w(T) = \displaystyle{\sup_{\alpha^2 + \beta^2 = 1}}
\Big\|\alpha {\rm Re}(T) + \beta {\rm Im}(T)\Big\|.
\end{align}

Let $N(\cdot)$ be a norm on $\mathbb{B}(\mathcal{H})$.
The norm $N(\cdot)$ is said to be an algebra norm if $N(TS) \leq N(T)N(S)$ for every $T, S\in\mathbb{B}(\mathcal{H})$,
and is called self-adjoint if $N(T^*) = N(T)$ for every $T\in\mathbb{B}(\mathcal{H})$.
For $T\in\mathbb{B}(\mathcal{H})$, we recall from \cite{A.K.2} the following generalization of the numerical radius:
\begin{align*}
w_{N}(T) = \displaystyle{\sup_{\theta \in \mathbb{R}}}
N\big({\rm Re}(e^{i\theta}T)\big).
\end{align*}
In particular, by taking $\theta = 0$ and $\theta = \frac{\pi}{2}$,
we have $N\big({\rm Re}(T)\big)\leq w_{N}(T)$ and $N\big({\rm Im}(T)\big)\leq w_{N}(T)$.
Abu-Omar and Kittaneh \cite{A.K.2} showed that $w_{N}(\cdot)$ is a self-adjoint norm
and $\frac{1}{2}N(T)\leq w_{N}(T)$ for all $T\in\mathbb{B}(\mathcal{H})$.
Also, if $T$ is self-adjoint, then $w_{N}(T)= N(T)$.
Furthermore, if $N(\cdot)$ is a self-adjoint norm on $\mathbb{B}(\mathcal{H})$, then $w_{N}(T)\leq N(T)$.
Therefore, for a self-adjoint norm $N(\cdot)$ on $\mathbb{B}(\mathcal{H})$, we have
\begin{align}\label{666}
\frac{1}{2}N(T)\leq w_{N}(T)\leq N(T).
\end{align}

The paper is organized as follows.

In Section 2, inspired by the numerical radius inequalities in \cite{K.M.Y},
for a given norm $N(\cdot)$ on $\mathbb{B}(\mathcal{H})$, we present
an expression of $w_{N}(\cdot)$, which generalizes equality (\ref{0101}),
and then apply it to obtain upper and lower bounds for $w_{N}(\cdot)$.
Further, following \cite{A.K.1, KIT1}, we obtain some generalized numerical radius inequalities involving the product of two operators.
The last section will present a refinement of the second inequality (\ref{111}),
which also refines inequalities (\ref{222}) and (\ref{333}).
\section{Main result}
We start this section by finding an upper bound for the generalized numerical radius as follows.
\begin{theorem}\label{T.3.3}
Let $T\in\mathbb{B}(\mathcal{H})$. Then
\begin{align*}
w_{N}(T) \leq \displaystyle{\inf_{\varphi \in \mathbb{R}}}\sqrt{N^2\big(\,{\rm Re}(e^{i\varphi}T)\big)
+ N^2\big(\,{\rm Im}(e^{i\varphi}T)\big)}.
\end{align*}
\end{theorem}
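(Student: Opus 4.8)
The plan is to exploit the rotational structure of $w_N(\cdot)$: since $w_N(T)=\sup_{\theta\in\mathbb R}N\big({\rm Re}(e^{i\theta}T)\big)$, it suffices to bound $N\big({\rm Re}(e^{i\theta}T)\big)$ uniformly in $\theta$ by the right-hand side for a fixed but arbitrary $\varphi$, and then take the supremum over $\theta$ followed by the infimum over $\varphi$. The trick is to factor $e^{i\theta}T=e^{i(\theta-\varphi)}\big(e^{i\varphi}T\big)$ and work with the Cartesian decomposition of $S:=e^{i\varphi}T$.

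First I would record the elementary identity that for a self-adjoint-part/imaginary-part decomposition $S={\rm Re}(S)+i\,{\rm Im}(S)$ and any $\psi\in\mathbb R$,
\begin{align*}
{\rm Re}(e^{i\psi}S)=\cos\psi\,{\rm Re}(S)-\sin\psi\,{\rm Im}(S),
\end{align*}
which follows by expanding $e^{i\psi}S=(\cos\psi+i\sin\psi)\big({\rm Re}(S)+i\,{\rm Im}(S)\big)$ and taking the Hermitian part. Applying $N(\cdot)$ and using the triangle inequality and positive homogeneity gives
\begin{align*}
N\big({\rm Re}(e^{i\psi}S)\big)\le|\cos\psi|\,N\big({\rm Re}(S)\big)+|\sin\psi|\,N\big({\rm Im}(S)\big).
\end{align*}

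Next I would apply the Cauchy--Schwarz inequality to the two-term sum on the right, using $\cos^2\psi+\sin^2\psi=1$, to obtain
\begin{align*}
N\big({\rm Re}(e^{i\psi}S)\big)\le\sqrt{N^2\big({\rm Re}(S)\big)+N^2\big({\rm Im}(S)\big)}.
\end{align*}
Now set $S=e^{i\varphi}T$, so that ${\rm Re}(S)={\rm Re}(e^{i\varphi}T)$ and ${\rm Im}(S)={\rm Im}(e^{i\varphi}T)$, and choose $\psi=\theta-\varphi$; since ${\rm Re}(e^{i\theta}T)={\rm Re}(e^{i\psi}S)$, the displayed bound holds for every $\theta\in\mathbb R$ with a right-hand side independent of $\theta$. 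Taking the supremum over $\theta$ yields $w_N(T)\le\sqrt{N^2\big({\rm Re}(e^{i\varphi}T)\big)+N^2\big({\rm Im}(e^{i\varphi}T)\big)}$ for each $\varphi\in\mathbb R$, and taking the infimum over $\varphi$ finishes the proof.

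I do not anticipate a genuine obstacle here: the argument is a clean rotation plus the triangle and Cauchy--Schwarz inequalities, and it uses only that $N(\cdot)$ is a norm (homogeneity and subadditivity) — the algebra-norm or self-adjointness hypotheses are not needed for this particular bound. The only point requiring a little care is bookkeeping the angles so that the free parameter $\varphi$ ends up inside the square root while the supremum parameter $\theta$ is absorbed; organizing the factorization as $e^{i\theta}T=e^{i(\theta-\varphi)}\big(e^{i\varphi}T\big)$ makes this transparent.
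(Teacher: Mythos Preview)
Your proof is correct and follows essentially the same route as the paper: expand ${\rm Re}(e^{i\theta}T)$ as a trigonometric combination of ${\rm Re}(e^{i\varphi}T)$ and ${\rm Im}(e^{i\varphi}T)$, apply the triangle inequality, then Cauchy--Schwarz with $\cos^2+\sin^2=1$, and finally optimize over $\varphi$. The only cosmetic difference is that the paper first records the identity $w_N(T)=\sup_{\alpha^2+\beta^2=1}N\big(\alpha\,{\rm Re}(T)+\beta\,{\rm Im}(T)\big)$ and then applies it to $e^{i\varphi}T$, whereas you organize the same computation via the factorization $e^{i\theta}T=e^{i(\theta-\varphi)}(e^{i\varphi}T)$.
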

\begin{proof}
Let $\theta\in \mathbb{R}$. Put $\alpha = \cos \theta$ and $\beta = -\sin \theta$.
We have
\begingroup\makeatletter\def\f@size{10}\check@mathfonts
\begin{align*}
{\rm Re}(e^{i(\theta)}T) = \frac{\big(\cos(\theta) + i\sin (\theta)\big)T
+ \big(\cos(\theta) - i\sin (\theta)\big)T^*}{2} = \alpha\,{\rm Re}(T) + \beta\,{\rm Im}(T).
\end{align*}
\endgroup
Hence
\begin{align}\label{T.1.3}
w_{N}(T) = \displaystyle{\sup_{\theta \in \mathbb{R}}}\,N\Big({\rm Re}(e^{i\theta}T)\Big)
= \displaystyle{\sup_{\alpha^2 + \beta^2 = 1,\,\alpha , \beta \in \mathbb{R}}}
N\Big(\alpha {\rm Re}(T) + \beta {\rm Im}(T)\Big).
\end{align}
Now, let $\varphi\in \mathbb{R}$. For $\alpha , \beta \in \mathbb{R}$,
by employing (\ref{T.1.3}) and the Cauchy--Schwarz inequality, we have
\begin{align*}
w_{N}(T) &= \displaystyle{\sup_{\alpha^2 + \beta^2 = 1}}
N\Big(\alpha {\rm Re}(e^{i\varphi}T) + \beta {\rm Im}(e^{i\varphi}T)\Big)
\\& \leq \displaystyle{\sup_{\alpha^2 + \beta^2 = 1}}
|\alpha| N\Big({\rm Re}(e^{i\varphi}T)\Big) + |\beta|N\Big( {\rm Im}(e^{i\varphi}T)\Big)
\\& \leq \displaystyle{\sup_{\alpha^2 + \beta^2 = 1}}
\sqrt{\alpha^2 + \beta^2}\sqrt{N^2\Big({\rm Re}(e^{i\varphi}T)\Big) + N^2\Big( {\rm Im}(e^{i\varphi}T)\Big)}
\\& = \sqrt{N^2\big(\,{\rm Re}(e^{i\varphi}T)\big)
+ N^2\big(\,{\rm Im}(e^{i\varphi}T)\big)}.
\end{align*}
Thus
\begin{align*}
w_{N}(T) \leq \displaystyle{\inf_{\varphi \in \mathbb{R}}}\sqrt{N^2\big(\,{\rm Re}(e^{i\varphi}T)\big)
+ N^2\big(\,{\rm Im}(e^{i\varphi}T)\big)}.
\end{align*}
\end{proof}
\begin{remark}\label{R.4.3}
Let $T\in\mathbb{B}(\mathcal{H})$. Considering
the Cartesian decomposition of $T$, we have
$w_{N}(T)\leq w_{N}\big(\,{\rm Re}(T)\big) + w_{N}\big(\,{\rm Im}(T)\big)$.
It follows from $w_{N}\big(\,{\rm Re}(T)\big)= N\big(\,{\rm Re}(T)\big)$ and
$w_{N}\big(\,{\rm Im}(T)\big)= N\big(\,{\rm Im}(T)\big)$ that
\begin{align}\label{I.1.R.4.3}
w_{N}(T)\leq N\big(\,{\rm Re}(T)\big) + N\big(\,{\rm Im}(T)\big).
\end{align}
On the other hand, by Theorem \ref{T.3.3} and the arithmetic-geometric mean inequality, we have
\begin{align*}
\displaystyle{\inf_{\varphi \in \mathbb{R}}}\sqrt{N^2\big(\,{\rm Re}(e^{i\varphi}T)\big)
+ N^2\big(\,{\rm Im}(e^{i\varphi}T)\big)}
& \leq \sqrt{N^2\big(\,{\rm Re}(T)\big)
+ N^2\big(\,{\rm Im}(T)\big)}
\\& \leq N\big(\,{\rm Re}(T)\big) + N\big(\,{\rm Im}(T)\big),
\end{align*}
and hence Theorem \ref{T.3.3} refines inequality (\ref{I.1.R.4.3}).
In particular, when $N(\cdot)$ is the usual operator norm, the inequality in Theorem \ref{T.3.3}
becomes
\begin{align}\label{I.2.R.4.3}
w(T) \leq \displaystyle{\inf_{\varphi \in \mathbb{R}}}\sqrt{\big\|\,{\rm Re}(e^{i\varphi}T)\big\|^2
+ \big\|\,{\rm Im}(e^{i\varphi}T)\big\|^2},
\end{align}
which actually refines the inequality
\begin{align}\label{I.3.R.4.3}
w(T)\leq \big\|\,{\rm Re}(T)\big\| + \big\|\,{\rm Im}(T)\big\|.
\end{align}
The following example shows that inequality (\ref{I.2.R.4.3}) is a nontrivial improvement of  inequality (\ref{I.3.R.4.3}).
Consider $T = \begin{bmatrix}
1 & 1 \\
0 & 0
\end{bmatrix}$.
Simple computations show that $w(T) = \frac{1 + \sqrt{2}}{2}$, $\big\|\,{\rm Re}(T)\big\| = \frac{\sqrt{3 + 2\sqrt{2}}}{2}$
and $\big\|\,{\rm Im}(T)\big\| = \frac{1}{2}$. Furthermore, for every $\varphi \in \mathbb{R}$ one can easily observe that
\begin{align*}
\big\|\,{\rm Re}(e^{i\varphi}T)\big\|^2 = \frac{1 + 2\cos^2 \varphi}{4} + \frac{\sqrt{\cos^2 \varphi + \cos^4 \varphi}}{2},
\end{align*}
\begin{align*}
\big\|\,{\rm Im}(e^{i\varphi}T)\big\|^2 = \frac{1 + 2\sin^2 \varphi}{4} + \frac{\sqrt{\sin^2 \varphi + \sin^4 \varphi}}{2}
\end{align*}
and
\begingroup\makeatletter\def\f@size{10}\check@mathfonts
\begin{align*}
\displaystyle{\inf_{\varphi \in \mathbb{R}}}\sqrt{\big\|\,{\rm Re}(e^{i\varphi}T)\big\|^2
+ \big\|\,{\rm Im}(e^{i\varphi)}T)\big\|^2}
&= \displaystyle{\inf_{\varphi \in \mathbb{R}}}
\sqrt{ 1 + \frac{1}{2}\Big(\sqrt{\cos^2 \varphi + \cos^4 \varphi} + \sqrt{\sin^2 \varphi + \sin^4 \varphi}\Big)}
\\&= \sqrt{1 + \frac{\sqrt{2}}{2}}.
\end{align*}
\endgroup
Thus
\begin{align*}
w(T) &= \frac{1 + \sqrt{2}}{2}
\\& < \displaystyle{\inf_{\varphi \in \mathbb{R}}}\sqrt{\big\|\,{\rm Re}(e^{i\varphi}T)\big\|^2
+ \big\|\,{\rm Im}(e^{i\varphi}T)\big\|^2} = \sqrt{1 + \frac{\sqrt{2}}{2}}
\\& < \big\|\,{\rm Re}(T)\big\| + \big\|\,{\rm Im}(T)\big\| = 1 + \frac{\sqrt{2}}{2}.
\end{align*}
\end{remark}
In the next theorem, we give a lower bound for the generalized
numerical radius of operators.
\begin{theorem}\label{T.5.3}
Let $T\in\mathbb{B}(\mathcal{H})$ and let $N(\cdot)$ be an algebra norm on $\mathbb{B}(\mathcal{H})$.
Then
\begin{align*}
\frac{N\big(TT^* + T^*T\big)}{4}
+ \frac{1}{2}\displaystyle{\sup_{\varphi \in \mathbb{R}}}\Big|N^2\big(\,{\rm Re}(e^{i\varphi}T)\big) - N^2\big(\,{\rm Im}(e^{i\varphi}T)\big)\Big|
\leq w^2_{N}(T).
\end{align*}
\end{theorem}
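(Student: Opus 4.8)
The plan is to fix $\varphi \in \mathbb{R}$ and abbreviate $A = {\rm Re}(e^{i\varphi}T)$ and $B = {\rm Im}(e^{i\varphi}T)$, two self-adjoint operators with $e^{i\varphi}T = A + iB$, so that $T = e^{-i\varphi}(A+iB)$ and $T^* = e^{i\varphi}(A-iB)$. The first step is the algebraic identity
\begin{align*}
TT^* + T^*T = (A+iB)(A-iB) + (A-iB)(A+iB) = 2(A^2 + B^2),
\end{align*}
obtained by direct expansion, the cross terms $\pm i(BA-AB)$ cancelling. A useful by-product is that $A^2 + B^2 = \tfrac12(TT^*+T^*T)$ does \emph{not} depend on $\varphi$; this is what will let the supremum below pass through cleanly.

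Next I would record the two elementary bounds $N(A) \leq w_{N}(T)$ and $N(B) \leq w_{N}(T)$. These follow from rotation invariance of $w_{N}$, namely $w_{N}(e^{i\varphi}T) = \sup_{\theta}N\big({\rm Re}(e^{i(\theta+\varphi)}T)\big) = w_{N}(T)$, together with the inequalities $N\big({\rm Re}(S)\big)\leq w_{N}(S)$ and $N\big({\rm Im}(S)\big)\leq w_{N}(S)$ already noted in the introduction (applied to $S = e^{i\varphi}T$).

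Then, invoking the algebra norm hypothesis, $N(A^2)\leq N^2(A)$ and $N(B^2)\leq N^2(B)$, so by the triangle inequality $N(A^2+B^2)\leq N^2(A)+N^2(B)$. Combining this with the previous step gives, for each $\varphi$,
\begin{align*}
\frac{N\big(TT^* + T^*T\big)}{4} + \frac{1}{2}\Big|N^2(A) - N^2(B)\Big|
&= \frac{N(A^2+B^2)}{2} + \frac{1}{2}\Big|N^2(A) - N^2(B)\Big| \\
&\leq \frac{N^2(A)+N^2(B)}{2} + \frac{\big|N^2(A)-N^2(B)\big|}{2} \\
&= \max\{N^2(A),\,N^2(B)\} \leq w^2_{N}(T).
\end{align*}
Finally, since the first term on the left is independent of $\varphi$, taking the supremum over $\varphi \in \mathbb{R}$ of the second term yields the asserted inequality. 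I do not anticipate a genuine obstacle here: the only delicate point is the cancellation producing $TT^*+T^*T = 2(A^2+B^2)$ and the accompanying observation that $A^2+B^2$ is $\varphi$-free, which is precisely what makes the constant term survive the supremum.
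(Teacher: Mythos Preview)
Your proof is correct and follows essentially the same route as the paper: both arguments rest on the identity $A^2+B^2=\tfrac12(TT^*+T^*T)$, the bound $\max\{N(A),N(B)\}\le w_N(T)$, the submultiplicativity $N(X^2)\le N(X)^2$, and the formula $\max\{a,b\}=\tfrac{a+b}{2}+\tfrac{|a-b|}{2}$, before passing to the supremum in $\varphi$. The only cosmetic difference is that the paper starts from $w_N^2(T)$ and estimates downward, whereas you start from the left-hand side and estimate upward.
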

\begin{proof}
Let $\varphi\in \mathbb{R}$. An easy calculation shows that
\begin{align}\label{I.1.T.5.3}
{\rm Re}^2(e^{i\varphi}T) + {\rm Im}^2(e^{i\varphi}T) = \frac{1}{2}\big(TT^* + T^*T \big).
\end{align}
Also, we have $w_{N}(T)\geq \max\big\{N\big({\rm Re}(e^{i\varphi}T)\big), N\big({\rm Im}(e^{i\varphi}T)\big)\big\}$. Thus
\begin{align*}
w^2_{N}(T) &\geq \max\Big\{N^2\big({\rm Re}(e^{i\varphi}T)\big), N^2\big({\rm Im}(e^{i\varphi}T)\big)\Big\}
\\& = \frac{N^2\big({\rm Re}(e^{i\varphi}T)\big) + N^2\big({\rm Im}(e^{i\varphi}T)\big)}{2}
+ \frac{\Big|N^2\big({\rm Re}(e^{i\varphi}T)\big) - N^2\big({\rm Im}(e^{i\varphi}T)\big)\Big|}{2}
\\& \geq \frac{N\big({\rm Re}^2(e^{i\varphi}T)\big) + N\big({\rm Im}^2(e^{i\varphi}T)\big)}{2}
+ \frac{\Big|N^2\big({\rm Re}(e^{i\varphi}T)\big) - N^2\big({\rm Im}(e^{i\varphi}T)\big)\Big|}{2}
\\& \qquad \qquad \qquad \qquad \qquad \qquad \qquad \qquad \qquad \Big(\mbox{since} \,N(\cdot) \, \mbox{is an algebra norm}\Big)
\\& \geq \frac{N\Big({\rm Re}^2(e^{i\varphi}T) + {\rm Im}^2(e^{i\varphi}T)\Big)}{2}
+ \frac{\Big|N^2\big({\rm Re}(e^{i\varphi}T)\big) - N^2\big({\rm Im}(e^{i\varphi}T)\big)\Big|}{2}
\\& = \frac{N\big(TT^* + T^*T\big)}{4} + \frac{\Big|N^2\big({\rm Re}(e^{i\varphi}T)\big) - N^2\big({\rm Im}(e^{i\varphi}T)\big)\Big|}{2}
\qquad \qquad \big(\mbox{by (\ref{I.1.T.5.3})}\big).
\end{align*}
Hence
\begin{align*}
\frac{N\big(TT^* + T^*T\big)}{4}
+ \frac{1}{2}\displaystyle{\sup_{\varphi \in \mathbb{R}}}\Big|N^2\big(\,{\rm Re}(e^{i\varphi}T)\big) - N^2\big(\,{\rm Im}(e^{i\varphi}T)\big)\Big|
\leq w^2_{N}(T).
\end{align*}
\end{proof}
\begin{remark}\label{R.6.3}
When $N(\cdot)$ is the usual operator norm, the inequality in Theorem \ref{T.5.3} becomes
\begin{align}\label{I.1.R.6.3}
\frac{\big\|\,TT^* + T^*T\,\big\|}{4}
+ \frac{1}{2}\displaystyle{\sup_{\varphi \in \mathbb{R}}}\Big|\,\|\,{\rm Re}(e^{i\varphi}T)\|^2 - \|\,{\rm Im}(e^{i\varphi}T)\|^2\Big| \leq w^2(T).
\end{align}
This inequality refines the inequality $\frac{\big\|\,TT^* + T^*T\,\big\|}{4}\leq w^2(T)$ in \cite{K}.
\end{remark}
Let $N(\cdot)$ be a norm on $\mathbb{B}(\mathcal{H})$.
If $N(\cdot)$ is a self-adjoint algebra norm,
it follows directly from (\ref{666}) that for every $T, S\in\mathbb{B}(\mathcal{H})$,
\begin{align}\label{3.00}
w_{N}(TS) \leq N(TS) \leq N(T)N(S) \leq 2N(T)w_{N}(S) \leq 4w_{N}(T)w_{N}(S).
\end{align}
Next, by adopting some ideas of \cite{A.K.1, KIT1}, we give some inequalities involving
the generalized numerical radius of the product of two operators, which refine inequalities (\ref{3.00}).

In order to achieve our aim, we need the following result.
\begin{theorem}\label{T.1.2}
Let $T, S\in\mathbb{B}(\mathcal{H})$. If $N(\cdot)$ is an algebra norm, then
\begin{align*}
w_{N}(TS \pm  ST^*) \leq w_{N}(S)\big(N(T) + N(T^*)\big).
\end{align*}
In particular, if $N(\cdot)$ is a self-adjoint algebra norm, then
\begin{align*}
w_{N}(TS \pm  ST^*) \leq 2w_{N}(S)N(T).
\end{align*}
\end{theorem}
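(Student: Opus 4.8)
The plan is to reduce the estimate to the defining formula $w_{N}(A) = \sup_{\theta \in \mathbb{R}} N\big({\rm Re}(e^{i\theta}A)\big)$ by producing, for each real $\theta$, an algebraic identity for ${\rm Re}\big(e^{i\theta}(TS \pm ST^*)\big)$ that splits it into a term with $T$ multiplied on the left and a term with $T^*$ multiplied on the right, the remaining factor in each being a real or an imaginary part of $e^{i\theta}S$. Once such an identity is in hand, the triangle inequality together with the submultiplicativity of the algebra norm does the rest.

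First I would handle the $+$ sign. Using $(TS + ST^*)^* = S^*T^* + TS^*$ and expanding, one gets
\[
2\,{\rm Re}\big(e^{i\theta}(TS + ST^*)\big) = \big(e^{i\theta}TS + e^{-i\theta}TS^*\big) + \big(e^{i\theta}ST^* + e^{-i\theta}S^*T^*\big),
\]
so regrouping yields the key identity ${\rm Re}\big(e^{i\theta}(TS + ST^*)\big) = T\,{\rm Re}(e^{i\theta}S) + {\rm Re}(e^{i\theta}S)\,T^*$. Applying $N(\cdot)$, the triangle inequality, and submultiplicativity, this gives
\[
N\big({\rm Re}(e^{i\theta}(TS + ST^*))\big) \leq \big(N(T) + N(T^*)\big)\, N\big({\rm Re}(e^{i\theta}S)\big) \leq \big(N(T) + N(T^*)\big)\, w_{N}(S),
\]
and taking the supremum over $\theta$ completes this case.

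For the $-$ sign the same scheme works with ${\rm Im}$ replacing ${\rm Re}$: the analogous expansion and regrouping give ${\rm Re}\big(e^{i\theta}(TS - ST^*)\big) = i\,T\,{\rm Im}(e^{i\theta}S) - i\,{\rm Im}(e^{i\theta}S)\,T^*$. Since $N(i\,\cdot) = N(\cdot)$ and, as in the proof of Theorem~\ref{T.3.3}, ${\rm Im}(e^{i\theta}S) = {\rm Re}(e^{i(\theta - \pi/2)}S)$ (equivalently $N\big({\rm Im}(e^{i\theta}S)\big) \leq w_{N}(e^{i\theta}S) = w_{N}(S)$), the same three-step estimate bounds $N\big({\rm Re}(e^{i\theta}(TS - ST^*))\big)$ by $\big(N(T) + N(T^*)\big)w_{N}(S)$, and the supremum over $\theta$ finishes the proof. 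The ``in particular'' assertion is then immediate, since $N(T^*) = N(T)$ when $N(\cdot)$ is self-adjoint.

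The only point requiring care is the bookkeeping of the exponential factors in the regrouping step, especially in the $-$ case, where the two middle terms must recombine with a relative minus sign so that the piece with $T^*$ on the right emerges as $-i\,{\rm Im}(e^{i\theta}S)\,T^*$; once the identities are established, the rest is a routine application of the algebra-norm axioms.
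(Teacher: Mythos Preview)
Your proof is correct and the treatment of the $+$ case is identical to the paper's. For the $-$ case you derive the companion identity ${\rm Re}\big(e^{i\theta}(TS - ST^*)\big) = i\,T\,{\rm Im}(e^{i\theta}S) - i\,{\rm Im}(e^{i\theta}S)\,T^*$ directly, whereas the paper obtains the same conclusion more quickly by substituting $iT$ for $T$ in the already-established $+$ inequality (since $(iT)S + S(iT)^* = i(TS - ST^*)$ and $N(iT)=N(T)$, $N((iT)^*)=N(T^*)$); the two arguments are minor variants of one another.
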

\begin{proof}
Let $\theta \in \mathbb{R}$. We have
\begin{align*}
N\Big({\rm Re}\big(e^{i\theta}(TS + ST^*)\big)\Big)
&= N\left(\frac{e^{i\theta}(TS + ST^*) + e^{-i\theta}(S^*T^* + TS^*)}{2}\right)
\\& = N\left(T\,\frac{e^{i\theta}S + e^{-i\theta}S^*}{2} + \frac{e^{i\theta}S + e^{-i\theta}S^*}{2}\,T^*\right)
\\& = N\Big(T\,{\rm Re}(e^{i\theta}S) + {\rm Re}(e^{i\theta}S)\,T^*\Big)
\\& \leq N\big(T\,{\rm Re}(e^{i\theta}S)\big) + N\big({\rm Re}(e^{i\theta}S)\,T^*\big)
\\& \leq N(T)N\big({\rm Re}(e^{i\theta}S)\big) + N\big({\rm Re}(e^{i\theta}S)\big)N(T^*)
\\&\qquad\qquad\qquad\qquad\Big(\mbox{since $N(\cdot)$ is an algebra norm}\Big)
\\& = N\big({\rm Re}(e^{i\theta}S)\big)\big(N(T) + N(T^*)\big)
\\& \leq w_{N}(S)\big(N(T) + N(T^*)\big).
\end{align*}
Now, by taking the supremum over all $\theta \in \mathbb{R}$, we obtain
\begin{align*}
\displaystyle{\sup_{\theta \in \mathbb{R}}}\,N\Big({\rm Re}\big(e^{i\theta}(TS + ST^*)\big)\Big) \leq w_{N}(S)\big(N(T) + N(T^*)\big),
\end{align*}
and hence
\begin{align}\label{I.2.T.1.2}
w_{N}(TS + ST^*) \leq w_{N}(S)\big(N(T) + N(T^*)\big).
\end{align}
Furthermore, by replacing $T$ in (\ref{I.2.T.1.2}) by $iT$, we arrive at
\begin{align}\label{I.3.T.1.2}
w_{N}\big((iT)S + S(iT)^*\big) \leq w_{N}(S)\big(N(iT) + N((iT)^*)\big).
\end{align}
Since $w_{N}\big((iT)S + S(iT)^*\big) = w_{N}(TS - ST^*)$, $N(iT) = N(T)$ and $N((iT)^*) = N(T^*)$, by virtue of (\ref{I.3.T.1.2}), we get
\begin{align}\label{I.4.T.1.2}
w_{N}(TS - ST^*) \leq w_{N}(S)\big(N(T) + N(T^*)\big).
\end{align}
Now from (\ref{I.2.T.1.2}) and (\ref{I.4.T.1.2}) it follows that $w_{N}(TS \pm ST^*) \leq w_{N}(S)\big(N(T) + N(T^*)\big)$.
\end{proof}
Let us designate the unitary group of all unitary operators in $\mathbb{B}(\mathcal{H})$ by $\mathcal{U}$.
Recall that a norm $N(\cdot)$ on $\mathbb{B}(\mathcal{H})$ is called weakly unitarily invariant
if $N(U^*TU)=N(T)$ for all $T\in\mathbb{B}(\mathcal{H})$ and for all $U\in\mathcal{U}$.
Notice that, if the norm $N(\cdot)$ is weakly unitarily invariant, then so is $w_N(\cdot)$.

Our next result reads as follows.
\begin{theorem}\label{T.1.5.2}
Let $N(\cdot)$ is a weakly unitarily invariant algebra norm on $\mathbb{B}(\mathcal{H})$,
and let $T$ and $S$ be self-adjoint operator in the norm-unit ball of $\mathbb{B}(\mathcal{H})$. Then
\begin{align*}
w_{N}(TS \pm ST) \leq \min\{w_{N}(T), w_{N}(S)\}\sup_{U\in\mathcal{U}}\big\{N(U) + N(U^*)\big\}.
\end{align*}
In particular, if $N(\cdot)$ is a weakly unitarily invariant self-adjoint algebra norm, then
\begin{align*}
w_{N}(TS \pm ST) \leq 2\min\{N(T), N(S)\}\sup_{U\in\mathcal{U}}N(U).
\end{align*}
\end{theorem}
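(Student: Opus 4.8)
The plan is to reduce the claim to Theorem \ref{T.1.2} by writing each self-adjoint contraction as the average of two unitaries.

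First, since $T = T^*$ and $\|T\| \le 1$, the spectrum of $T$ lies in $[-1,1]$, so $U := T + i(I - T^2)^{1/2}$ (defined through the continuous functional calculus) is unitary: because $T$ commutes with $(I-T^2)^{1/2}$, one has $UU^* = T^2 + (I - T^2) = I = U^*U$, and moreover $T = \tfrac12(U + U^*)$. Substituting this into $TS \pm ST$ and regrouping gives, in both sign cases,
\[
TS \pm ST = \tfrac12\big[(US \pm SU^*) + (U^*S \pm SU)\big],
\]
where each bracketed term has exactly the form $VS \pm SV^*$ with $V$ unitary---namely $V = U$ for the first and $V = U^*$ for the second. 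The one small thing to check here is that this regrouping really matches the shape required by Theorem \ref{T.1.2} for each choice of sign.

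Next, since $w_N(\cdot)$ is a norm, the triangle inequality gives $w_N(TS \pm ST) \le \tfrac12\big(w_N(US \pm SU^*) + w_N(U^*S \pm SU)\big)$. Applying Theorem \ref{T.1.2} to the pair $(U, S)$ yields $w_N(US \pm SU^*) \le w_N(S)\big(N(U) + N(U^*)\big)$, and applying it to the pair $(U^*, S)$ yields $w_N(U^*S \pm SU) \le w_N(S)\big(N(U^*) + N(U)\big)$; alternatively, since $w_N$ inherits weak unitary invariance from $N$, one has $w_N(US \pm SU^*) = w_N\big(U^*(US \pm SU^*)U\big) = w_N(U^*S \pm SU)$, so a single application of Theorem \ref{T.1.2} already suffices. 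Either way,
\[
w_N(TS \pm ST) \le w_N(S)\big(N(U) + N(U^*)\big) \le w_N(S)\,\sup_{V \in \mathcal{U}}\big\{N(V) + N(V^*)\big\}.
\]

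Finally, to obtain the minimum I would run the same argument with the roles of $T$ and $S$ interchanged: writing $S = \tfrac12(V + V^*)$ for a unitary $V$ and using $w_N(TS \pm ST) = w_N(ST \pm TS)$, the identical computation gives $w_N(TS \pm ST) \le w_N(T)\,\sup_{V\in\mathcal{U}}\{N(V) + N(V^*)\}$, and taking the smaller of the two bounds proves the stated inequality. For the final assertion of the theorem, if $N$ is in addition self-adjoint then $N(V^*) = N(V)$ for every unitary $V$, so the supremum equals $2\sup_{V} N(V)$, while $T$ and $S$ self-adjoint forces $w_N(T) = N(T)$ and $w_N(S) = N(S)$. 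The only genuine ingredient is the unitary-average representation of a self-adjoint contraction; everything after that is routine bookkeeping with Theorem \ref{T.1.2} and the triangle inequality, and that representation step is the one I would expect to spell out with the most care.
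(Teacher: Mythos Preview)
Your proof is correct and follows essentially the same approach as the paper's: write one of the self-adjoint contractions as $\tfrac12(U+U^*)$ for a unitary $U$, split the commutator/anticommutator into two pieces of the shape $VS\pm SV^*$, and apply Theorem~\ref{T.1.2}. The only cosmetic difference is that the paper decomposes $S$ (rather than $T$) first and uses the weak unitary invariance of $w_N$ to identify the two summands before a single application of Theorem~\ref{T.1.2}, exactly the alternative you also note; the symmetry argument and the ``in particular'' clause are handled identically.
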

\begin{proof}
Let $U=S+i(I-S^2)^{1/2}$. It follows from the functional calculus for $S$
that $U$ is a unitary operator and $S={\rm Re}\,U$. We have
\begin{align*}
w_{N}(TS \pm ST)&=w_N\left(T\frac{U+U^*}{2}\pm\frac{U+U^*}{2}T\right)
\\&\leq\frac{1}{2}w_N(TU\pm U^*T)+\frac{1}{2}w_N(TU^*\pm UT)
\\&=\frac{1}{2}w_N(TU\pm U^*T)+\frac{1}{2}w_N\big(U^*(UT\pm TU^*)U\big)
\\& \qquad \qquad \qquad \qquad \quad \Big({\rm by~the~weakly~unitary~invariance~of~} w_N(\cdot)\Big)
\\&=w_N(TU\pm U^*T)
\\&\leq w_N(T)\big(N(U) + N(U^*)\big) \qquad \qquad \qquad \qquad \quad({\rm by~Theorem~} \ref{T.1.2})
\\&\leq w_{N}(T)\sup_{U\in\mathcal{U}}\big\{N(U) + N(U^*)\big\}.
\end{align*}
By changing the roles of $T$ and $S$ with each other, we arrive at
\begin{align*}
w_{N}(TS \pm ST)\leq \min\{w_{N}(T), w_{N}(S)\}\sup_{U\in\mathcal{U}}\big\{N(U) + N(U^*)\big\}.
\end{align*}
\end{proof}
\begin{corollary}
Let $N(\cdot)$ is a weakly unitarily invariant self-adjoint algebra norm on $\mathbb{B}(\mathcal{H})$
and let $T$ be an operator in the norm-unit ball of $\mathbb{B}(\mathcal{H})$. Then
\begin{align*}
w_{N}(TT^* - T^*T) \leq 4N(T)\sup_{U\in\mathcal{U}}N(U).
\end{align*}
\end{corollary}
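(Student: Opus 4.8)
The plan is to express the self-commutator $TT^* - T^*T$ through the Cartesian decomposition of $T$ and then invoke the ``in particular'' part of Theorem~\ref{T.1.5.2}. Set $A = {\rm Re}(T)$ and $B = {\rm Im}(T)$, so that $T = A + iB$ with $A$ and $B$ self-adjoint. Since $N(\cdot)$ is a self-adjoint norm, the right-hand inequality in (\ref{666}) is available, and combined with $N\big({\rm Re}(T)\big)\le w_N(T)$ (take $\theta = 0$ in the definition of $w_N$) it gives $N(A) \le w_N(T) \le N(T) \le 1$; likewise $N(B) \le 1$. Hence $A$ and $B$ are self-adjoint operators lying in the norm-unit ball of $\mathbb{B}(\mathcal{H})$, which is exactly what is needed to apply Theorem~\ref{T.1.5.2} to the pair $A, B$.

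Next I would record the elementary identity coming from the Cartesian decomposition:
\begin{align*}
TT^* - T^*T = (A+iB)(A-iB) - (A-iB)(A+iB) = 2i\big(BA - AB\big) = -2i\big(AB - BA\big).
\end{align*}
Because $w_N(\cdot)$ is a norm on $\mathbb{B}(\mathcal{H})$, it is absolutely homogeneous, so $w_N(TT^* - T^*T) = 2\,w_N(AB - BA)$.

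It then remains to bound $w_N(AB - BA)$. Applying Theorem~\ref{T.1.5.2} with the minus sign to the self-adjoint norm-unit-ball operators $A$ and $B$, and using that $N(\cdot)$ is a weakly unitarily invariant self-adjoint algebra norm, yields
\begin{align*}
w_N(AB - BA) \le 2\min\{N(A), N(B)\}\sup_{U\in\mathcal{U}}N(U) \le 2N(A)\sup_{U\in\mathcal{U}}N(U) \le 2N(T)\sup_{U\in\mathcal{U}}N(U),
\end{align*}
the last step again using $N(A) = N\big({\rm Re}(T)\big) \le w_N(T) \le N(T)$. Combining this with the homogeneity identity of the previous paragraph gives $w_N(TT^* - T^*T) \le 4N(T)\sup_{U\in\mathcal{U}}N(U)$, which is the assertion. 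I do not expect any genuine obstacle here; the only point requiring a moment's care is verifying that $A$ and $B$ indeed lie in the norm-unit ball so that Theorem~\ref{T.1.5.2} applies, and this rests precisely on the self-adjointness hypothesis on $N(\cdot)$ through (\ref{666}).
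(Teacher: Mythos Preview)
Your approach is exactly the paper's: write $TT^*-T^*T=2i\big({\rm Im}(T)\,{\rm Re}(T)-{\rm Re}(T)\,{\rm Im}(T)\big)$, apply the self-adjoint case of Theorem~\ref{T.1.5.2} to the pair ${\rm Re}(T),{\rm Im}(T)$, and finish with $N\big({\rm Re}(T)\big)\le N(T)$.

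There is one point that needs correction. The phrase ``norm-unit ball of $\mathbb{B}(\mathcal{H})$'' in both the corollary and Theorem~\ref{T.1.5.2} refers to the \emph{usual operator norm}, not to $N(\cdot)$: the proof of Theorem~\ref{T.1.5.2} requires $\|S\|\le 1$ so that $U=S+i(I-S^2)^{1/2}$ is unitary via the continuous functional calculus. Your chain $N(A)\le w_N(T)\le N(T)\le 1$ both reads the hypothesis as $N(T)\le 1$ (not what is assumed) and does not by itself give $\|A\|\le 1$, which is what Theorem~\ref{T.1.5.2} actually needs. The fix is immediate and is what the paper does: from $\|T\|\le 1$ one has $\|{\rm Re}(T)\|\le\|T\|\le 1$ and $\|{\rm Im}(T)\|\le\|T\|\le 1$, so ${\rm Re}(T)$ and ${\rm Im}(T)$ lie in the operator-norm unit ball and Theorem~\ref{T.1.5.2} applies. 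With that adjustment your argument is complete and coincides with the paper's.
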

\begin{proof}
Let $T =\,{\rm Re}(T) + i\,{\rm Im}(T)$ is the Cartesian decomposition of $T$.
Clearly $TT^*-T^*T=2i\big(\,{\rm Im}(T)\,{\rm Re}(T) - {\rm Re}(T)\,{\rm Im}(T)\big)$.
In addition, ${\rm Re}(T)$ and ${\rm Im}(T)$ are self-adjoint operators in the unit ball of $\mathbb{B}(\mathcal{H})$.
It follows from Theorem \ref{T.1.5.2} that
\begin{align*}
w_{N}(TT^* - T^*T)&=2w_N\big(\,{\rm Im}(T)\,{\rm Re}(T) - {\rm Re}(T)\,{\rm Im}(T)\big)
\\&\leq 4\min\big\{N\big(\,{\rm Im}(T)\big), N\big(\,{\rm Re}(T)\big)\big\}\sup_{U\in\mathcal{U}}N(U)
\\&\leq 4N(T)\sup_{U\in\mathcal{U}}N(U).
\end{align*}
\end{proof}
Our next theorems give some inequalities for the generalized numerical
radius of the product of two Hilbert space operators.
\begin{theorem}\label{T.3.2}
Let $T, S\in\mathbb{B}(\mathcal{H})$. If $N(\cdot)$ is a self-adjoint algebra norm, then
\begin{align*}
w_{N}(TS) &\leq \min\Big\{N(T)w_{N}(S) + \frac{1}{2} w_{N}(TS \pm ST^*), N(S)w_{N}(T) + \frac{1}{2} w_{N}(TS \pm S^*T)\Big\}
\\& \leq 2\min\Big\{N(T)w_{N}(S), N(S)w_{N}(T)\Big\}
\\& \leq 4w_{N}(T)w_{N}(S).
\end{align*}
\end{theorem}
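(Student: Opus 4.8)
The plan is to establish the three inequalities in the displayed chain one at a time, from left to right; the leftmost one is the substantive step, while the other two drop out of Theorem~\ref{T.1.2} and inequality~(\ref{666}).

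For the first estimate in the minimum, I would begin with the trivial decomposition $TS = \frac{1}{2}(TS + ST^*) + \frac{1}{2}(TS - ST^*)$, so that the triangle inequality for the norm $w_N(\cdot)$ gives
\[
w_{N}(TS) \le \frac{1}{2}w_{N}(TS + ST^*) + \frac{1}{2}w_{N}(TS - ST^*).
\]
By Theorem~\ref{T.1.2} applied with a self-adjoint algebra norm, each of $w_{N}(TS + ST^*)$ and $w_{N}(TS - ST^*)$ is at most $2N(T)w_{N}(S)$. Hence, keeping one of the two summands on the right-hand side untouched and bounding the other by $N(T)w_{N}(S)$, I obtain $w_{N}(TS) \le N(T)w_{N}(S) + \frac{1}{2}w_{N}(TS \pm ST^*)$ for either choice of sign.

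For the second estimate in the minimum I would use instead $TS = \frac{1}{2}(TS + S^*T) + \frac{1}{2}(TS - S^*T)$. The terms $TS \pm S^*T$ are not literally of the form handled by Theorem~\ref{T.1.2}, but applying that theorem with $T$ and $S$ replaced by $S^*$ and $T$ yields $w_{N}\big(S^*T \pm T(S^*)^*\big) \le w_{N}(T)\big(N(S^*) + N(S)\big) = 2w_{N}(T)N(S)$ for a self-adjoint $N$; since $S^*T \pm TS$ agrees with $TS \pm S^*T$ up to an overall factor $-1$, which a norm ignores, this gives $w_{N}(TS \pm S^*T) \le 2w_{N}(T)N(S)$. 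Running the same argument as before now produces $w_{N}(TS) \le N(S)w_{N}(T) + \frac{1}{2}w_{N}(TS \pm S^*T)$, and taking the minimum of the two bounds proves the leftmost inequality. The delicate point here — the only one in the whole argument — is getting this substitution right and noticing that the norm property of $w_N(\cdot)$ lets both signs be handled together.

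The middle inequality is then immediate: from the bounds recorded above, $N(T)w_{N}(S) + \frac{1}{2}w_{N}(TS \pm ST^*) \le N(T)w_{N}(S) + N(T)w_{N}(S) = 2N(T)w_{N}(S)$, and likewise $N(S)w_{N}(T) + \frac{1}{2}w_{N}(TS \pm S^*T) \le 2N(S)w_{N}(T)$; since the minimum of two quantities is monotone in each argument, this gives $2\min\{N(T)w_{N}(S),\, N(S)w_{N}(T)\}$. For the rightmost inequality, the self-adjointness of $N(\cdot)$ and the left half of~(\ref{666}) give $N(T) \le 2w_{N}(T)$ and $N(S) \le 2w_{N}(S)$, so $N(T)w_{N}(S) \le 2w_{N}(T)w_{N}(S)$ and $N(S)w_{N}(T) \le 2w_{N}(T)w_{N}(S)$; multiplying by $2$ and taking the minimum yields $4w_{N}(T)w_{N}(S)$, which completes the chain. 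Apart from the substitution noted above, every step is routine manipulation with the triangle inequality and the self-adjointness of $N(\cdot)$ and $w_N(\cdot)$, so I do not expect any genuine obstacle.
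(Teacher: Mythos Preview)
Your argument is correct, and it takes a genuinely different route from the paper. The paper proves the first inequality by unrolling the definition $w_N(TS)=\sup_\theta N\big(\mathrm{Re}(e^{i\theta}TS)\big)$, writing $\mathrm{Re}(e^{i\theta}TS)=T\,\mathrm{Re}(e^{i\theta}S)+e^{-i\theta}(S^*T^*-TS^*)/2$, and then bounding the two pieces separately via $w_N\le N$ and the submultiplicativity of $N$; a subsequent scalar substitution recovers the other sign, and a swap $T\mapsto S^*$, $S\mapsto T^*$ (using self-adjointness of $w_N$) gives the companion bound. Your proof avoids the supremum altogether: you work directly with $w_N$ as a norm, split $TS$ as $\tfrac12(TS+ST^*)+\tfrac12(TS-ST^*)$ (respectively with $S^*T$), and invoke Theorem~\ref{T.1.2} to bound one summand while retaining the other. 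This is shorter and conceptually cleaner, since it uses Theorem~\ref{T.1.2} as a black box rather than partially re-deriving it; the paper's approach, on the other hand, makes the role of the algebra-norm property and of the inequality $w_N\le N$ more explicit. The second and third inequalities are handled identically in both proofs.
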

\begin{proof}
The second inequality follows from Theorem \ref{T.1.2}.
The third inequality follows from (\ref{666}).
It is therefore enough to prove the first inequality.
Let $\theta \in \mathbb{R}$. Since ${\rm Re}\big(e^{i\theta}(TS)\big)$ is self-adjoint, we have
\begin{align*}
N\Big({\rm Re}\big(e^{i\theta}(TS)\big)\Big)&= w_{N}\Big({\rm Re}\big(e^{i\theta}(TS)\big)\Big)
\\& = w_{N}\left(\frac{e^{i\theta}TS + e^{-i\theta}S^*T^*}{2}\right)
\\& = w_{N}\left(T\frac{e^{i\theta}S + e^{-i\theta}S^*}{2} + e^{-i\theta}\frac{S^*T^* - TS^*}{2}\right)
\\& \leq w_{N}\left(T\frac{e^{i\theta}S + e^{-i\theta}S^*}{2}\right) + w_{N}\left(e^{-i\theta}\frac{S^*T^* - TS^*}{2}\right)
\\& \leq N\left(T\frac{e^{i\theta}S + e^{-i\theta}S^*}{2}\right) + w_{N}\left(e^{-i\theta}\frac{S^*T^* - TS^*}{2}\right)
\\& \leq N(T)N\big({\rm Re}(e^{i\theta}S)\big) + \frac{1}{2}w_{N}(S^*T^* - TS^*)
\\& \leq N(T)w_{N}(S) + \frac{1}{2}w_{N}(TS - S^*T)
\end{align*}
Thus
\begin{align}\label{I.1.T.3.2}
w_{N}(TS) = \displaystyle{\sup_{\theta \in \mathbb{R}}}N\Big({\rm Re}\big(e^{i\theta}(TS)\big)\Big)
\leq N(T)w_{N}(S) + \frac{1}{2}w_{N}(TS - S^*T).
\end{align}
Now, by replacing $S$ by $-iS$ in (\ref{I.1.T.3.2}), we obtain
\begin{align}\label{I.2.T.3.2}
w_{N}(TS) &= w_{N}(T(iS))\nonumber\\
&\leq N(T)w_{N}(iS) + \frac{1}{2}w_{N}(T(iS) - (iS)^*T)\nonumber \\
& = N(T)w_{N}(S) + \frac{1}{2}w_{N}(TS + S^*T).
\end{align}
From (\ref{I.1.T.3.2}) and (\ref{I.2.T.3.2}) we conclude that
\begin{align}\label{I.3.T.3.2}
w_{N}(TS) \leq N(T)w_{N}(S) + \frac{1}{2} w_{N}(TS \pm ST^*).
\end{align}
Now, by replacing $T$ by $S^*$ and $S$ by $T^*$ in (\ref{I.3.T.3.2}), we obtain
\begin{align*}
w_{N}(TS) &= w_{N}\big((TS)^*\big) = w_{N}(S^*T^*)
\\& \leq N(S^*)w_{N}(T^*) + \frac{1}{2} w_{N}(S^*T^* \pm T^*S)
\\& = N(S)w_{N}(T) + \frac{1}{2} w_{N}(TS \pm S^*T),
\end{align*}
whence
\begin{align}\label{I.4.T.3.2}
w_{N}(TS) \leq N(S)w_{N}(T) + \frac{1}{2} w_{N}(TS \pm S^*T).
\end{align}
Employing (\ref{I.3.T.3.2}) and (\ref{I.4.T.3.2}) we deduce the desired result.
\end{proof}
We finish this section by the following result.
\begin{corollary}\label{C.4.2}
Let $T, S\in\mathbb{B}(\mathcal{H})$ be self-adjoint operators and let $N(\cdot)$ be a self-adjoint algebra norm.
If $TS =\pm ST$, then
\begin{align*}
w_{N}(TS) \leq \min\{N(T)w_{N}(S), N(S)w_{N}(T)\}.
\end{align*}
\end{corollary}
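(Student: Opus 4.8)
The plan is to derive Corollary~\ref{C.4.2} directly from the first inequality of Theorem~\ref{T.3.2}, exploiting the commutation (or anticommutation) hypothesis to kill the extra term $\frac12 w_N(TS\pm ST^*)$. Since $T$ and $S$ are self-adjoint, we have $T^*=T$ and $S^*=S$, so the term $TS\pm ST^*$ appearing in the first bound becomes $TS\pm ST$, and the term $TS\pm S^*T$ becomes $TS\pm ST$ as well. The point is that the $\pm$ in Theorem~\ref{T.3.2} means the inequality holds for \emph{both} choices of sign simultaneously; so I get to pick whichever sign makes $TS\pm ST=0$.

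First I would record the two instances of the first inequality in Theorem~\ref{T.3.2} with $T^*$ and $S^*$ replaced by $T$ and $S$:
\begin{align*}
w_{N}(TS) &\leq N(T)w_{N}(S) + \tfrac{1}{2}\, w_{N}(TS \pm ST),\\
w_{N}(TS) &\leq N(S)w_{N}(T) + \tfrac{1}{2}\, w_{N}(TS \pm ST).
\end{align*}
Then I would split into the two cases of the hypothesis. If $TS=ST$, choose the minus sign: $TS-ST=0$, so $w_N(TS-ST)=w_N(0)=0$, and each displayed inequality reduces to $w_N(TS)\le N(T)w_N(S)$ and $w_N(TS)\le N(S)w_N(T)$, respectively; taking the minimum of the two right-hand sides gives the claim. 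If instead $TS=-ST$, choose the plus sign: $TS+ST=0$, and the same reduction applies verbatim. In either case the conclusion $w_{N}(TS) \leq \min\{N(T)w_{N}(S),\, N(S)w_{N}(T)\}$ follows.

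There is essentially no obstacle here; the only small point worth being careful about is justifying that $w_N(0)=0$, which is immediate since $w_N(\cdot)$ is a norm, and noting that Theorem~\ref{T.3.2} requires only that $N(\cdot)$ be a self-adjoint algebra norm, which is precisely the hypothesis of the corollary. One could alternatively phrase the proof as a single line by observing that under the hypothesis one of the two quantities $w_N(TS+ST)$, $w_N(TS-ST)$ vanishes, hence $\frac12 w_N(TS\pm ST)$ can be taken to be $0$ for the appropriate sign, and then quoting the first inequality of Theorem~\ref{T.3.2} directly.
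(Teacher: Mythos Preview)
Your proposal is correct and is precisely the argument the paper intends: the corollary is stated without proof immediately after Theorem~\ref{T.3.2}, as a direct consequence of its first inequality once self-adjointness turns $ST^*$ and $S^*T$ into $ST$ and the hypothesis $TS=\pm ST$ annihilates the commutator/anticommutator term. There is nothing to add.
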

\section{Some applications}
In \cite{D.1}, Dragomir has introduced the following norm on $\mathbb{B}(\mathcal{H})$:
\begin{align*}
\Omega(T) = \sup\Big\{\big\|\zeta T + \eta T^*\big\|: \,\, \zeta, \eta \in \mathbb{C}, \, |\zeta|^2 + |\eta|^2 \leq 1\Big\}
\end{align*}
where $T\in\mathbb{B}(\mathcal{H})$.
It is clear that $\Omega(\cdot)$ is a self-adjoint norm on $\mathbb{B}(\mathcal{H})$ and $\Omega(T) = \sqrt{2}\|T\|$ if $T$ is self-adjoint.
It has been shown in \cite{D.1} that the following equality holds true:
\begin{align}\label{I.1.R.4.20}
\Omega(T) = \displaystyle{\sup_{\|x\|=1, \|y\|=1}} \sqrt{|\langle Ty, x\rangle|^2 + |\langle T^*y, x\rangle|^2}.
\end{align}
As pointed out in \cite{D.1}, $\Omega(\cdot)$ also satisfies the double inequality:
\begin{align}\label{I.1.R.4.2}
\|T\| \leq \Omega(T)\leq \sqrt{2}\|T\|
\end{align}
for each $T\in\mathbb{B}(\mathcal{H})$.
When $N(\cdot)$ is taken to be $\Omega(\cdot)$ on $\mathbb{B}(\mathcal{H})$, the norm $w_{N}(\cdot)$ is denoted by $w_{\Omega}(\cdot)$.
Hence, $w_{\Omega}(T) = \displaystyle{\sup_{\theta\in \mathbb{R}}}\,\Omega\big(\,{\rm Re}(e^{i\theta}T)\big)$
for any $T\in\mathbb{B}(\mathcal{H})$. In the following theorem, we obtain a formula for $w_{\Omega}(T)$ in terms of $w(T)$.
\begin{proposition}\label{T.7.2.0}
$w_{\Omega}(T) = \sqrt{2}w(T)$ for all $T\in\mathbb{B}(\mathcal{H})$.
\end{proposition}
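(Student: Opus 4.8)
The plan is to reduce the computation of $w_{\Omega}(T)$ to the value of $\Omega(\cdot)$ on self-adjoint operators, which is already known, and then to invoke the classical formula for $w(T)$ recalled in Section~1.

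First I would fix $\theta\in\mathbb{R}$ and note that $\,{\rm Re}(e^{i\theta}T)=\frac{e^{i\theta}T+e^{-i\theta}T^*}{2}$ is self-adjoint. Since it was observed above that $\Omega(S)=\sqrt{2}\,\|S\|$ for every self-adjoint $S\in\mathbb{B}(\mathcal{H})$ (which itself follows at once from the definition of $\Omega$: for $S=S^*$ one has $\zeta S+\eta S^*=(\zeta+\eta)S$, and $\sup\{|\zeta+\eta|:|\zeta|^2+|\eta|^2\le 1\}=\sqrt{2}$ by the Cauchy--Schwarz inequality), we get
\begin{align*}
\Omega\big(\,{\rm Re}(e^{i\theta}T)\big)=\sqrt{2}\,\big\|\,{\rm Re}(e^{i\theta}T)\big\|.
\end{align*}

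Next I would take the supremum over $\theta\in\mathbb{R}$ on both sides, which gives
\begin{align*}
w_{\Omega}(T)=\sup_{\theta\in\mathbb{R}}\Omega\big(\,{\rm Re}(e^{i\theta}T)\big)
=\sqrt{2}\,\sup_{\theta\in\mathbb{R}}\big\|\,{\rm Re}(e^{i\theta}T)\big\|,
\end{align*}
and then apply the well-known identity $w(T)=\sup_{\theta\in\mathbb{R}}\big\|\,{\rm Re}(e^{i\theta}T)\big\|$ to conclude that $w_{\Omega}(T)=\sqrt{2}\,w(T)$.

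I do not expect any genuine obstacle here: the argument is essentially bookkeeping, and the only structural input specific to this section is the evaluation $\Omega(S)=\sqrt{2}\,\|S\|$ for self-adjoint $S$. (One could instead argue directly from the representation $\Omega(T)=\sup_{\|x\|=\|y\|=1}\sqrt{|\langle Ty,x\rangle|^2+|\langle T^*y,x\rangle|^2}$ stated in \eqref{I.1.R.4.20}, but the self-adjointness route above is shorter and avoids any supremum manipulation over pairs of unit vectors.)
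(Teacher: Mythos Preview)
Your proof is correct and follows essentially the same route as the paper: both use that ${\rm Re}(e^{i\theta}T)$ is self-adjoint, apply $\Omega(S)=\sqrt{2}\,\|S\|$ for self-adjoint $S$, and then invoke $w(T)=\sup_{\theta}\|{\rm Re}(e^{i\theta}T)\|$. You even supply the short justification for the self-adjoint evaluation of $\Omega$, which the paper simply states without proof.
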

\begin{proof}
Let $T\in\mathbb{B}(\mathcal{H})$. From the fact that ${\rm Re}(e^{i\theta}T)$ is self-adjoint for each $\theta\in \mathbb{R}$, we conclude that
\begin{align*}
w_{\Omega}(T) = \displaystyle{\sup_{\theta\in \mathbb{R}}}
\,\Omega\big(\,{\rm Re}(e^{i\theta}T)\big)
= \sqrt{2}\,\displaystyle{\sup_{\theta\in \mathbb{R}}}\,\big\|\,{\rm Re}(e^{i\theta}T)\big\|
= \sqrt{2}w(T).
\end{align*}
\end{proof}
Because of $\Omega(\cdot)$ is a self-adjoint norm on $\mathbb{B}(\mathcal{H})$,
by (\ref{666}), we observe that
\begin{align*}
\frac{1}{2}\Omega(T)\leq w_{\Omega}(T)\leq \Omega(T).
\end{align*}
In the following theorem, we present an equivalent condition
for $w_{\Omega}(\cdot)=\frac{1}{2}\Omega(\cdot)$.
\begin{theorem}\label{T.3.5.2}
Let $T\in\mathbb{B}(\mathcal{H})$. Then the following conditions are equivalent:
\begin{itemize}
\item[(i)] $w_{\Omega}(T) = \frac{1}{2}\Omega(T)$.
\item[(ii)] $\Omega(T) = 2\sqrt{2}\|\,{\rm Re}(e^{i\theta}T)\|$ for all $\theta \in \mathbb{R}$.
\end{itemize}
\end{theorem}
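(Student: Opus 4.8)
The plan rests on the identity $w_{\Omega}(T)=\sqrt{2}\,w(T)$ from Proposition \ref{T.7.2.0}, together with the elementary formula $w(T)=\sup_{\theta\in\mathbb{R}}\|{\rm Re}(e^{i\theta}T)\|$ recalled in the Introduction. These let me rephrase (i) as the single equality $\Omega(T)=2\sqrt{2}\,w(T)$. With this reformulation in hand, (ii) $\Rightarrow$ (i) is immediate: if $\Omega(T)=2\sqrt{2}\|{\rm Re}(e^{i\theta}T)\|$ for every $\theta$, then $\|{\rm Re}(e^{i\theta}T)\|$ does not depend on $\theta$ and equals $\Omega(T)/(2\sqrt{2})$, so taking the supremum over $\theta$ gives $w(T)=\Omega(T)/(2\sqrt{2})$, i.e. $\frac{1}{2}\Omega(T)=\sqrt{2}\,w(T)=w_{\Omega}(T)$.

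For (i) $\Rightarrow$ (ii) the strategy is to isolate one auxiliary inequality,
\begin{align*}
\Omega(T)\le 2w(T)\qquad\bigl(T\in\mathbb{B}(\mathcal{H})\bigr),
\end{align*}
and then read off the conclusion. Indeed, combining it with the reformulation of (i) yields $2\sqrt{2}\,w(T)=\Omega(T)\le 2w(T)$, whence $(\sqrt{2}-1)w(T)\le 0$, so $w(T)=0$ and $T=0$; for $T=0$ both sides of (ii) vanish, so (ii) holds. (In effect the theorem says that $w_{\Omega}(\cdot)$ attains its lower bound $\frac{1}{2}\Omega(\cdot)$ only at the zero operator, in contrast with the ordinary numerical radius.)

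The one genuine computation is the auxiliary inequality, which I would prove from Dragomir's formula (\ref{I.1.R.4.20}) (rewriting $|\langle T^{*}y,x\rangle|=|\langle Tx,y\rangle|$). Fix unit vectors $x,y$. The polarization identity gives $\langle Tx,y\rangle+\langle Ty,x\rangle=\frac{1}{2}\bigl(\langle T(x+y),x+y\rangle-\langle T(x-y),x-y\rangle\bigr)$, so, using $|\langle Tz,z\rangle|\le w(T)\|z\|^{2}$ and the parallelogram law, $|\langle Tx,y\rangle+\langle Ty,x\rangle|\le\frac{w(T)}{2}\bigl(\|x+y\|^{2}+\|x-y\|^{2}\bigr)=2w(T)$; replacing $y$ by $iy$ gives $|\langle Tx,y\rangle-\langle Ty,x\rangle|\le 2w(T)$. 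Adding the squares of these two bounds and invoking the parallelogram identity in $\mathbb{C}$ yields $|\langle Tx,y\rangle|^{2}+|\langle Ty,x\rangle|^{2}\le 4w^{2}(T)$; taking the supremum over all unit $x,y$ and using (\ref{I.1.R.4.20}) gives $\Omega(T)\le 2w(T)$.

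The main (and essentially only) obstacle is recognizing that $\Omega\le 2w$ is both the right tool and is true, and carrying out its short proof; everything else is bookkeeping. A possible alternative: (i) forces simultaneously $w(T)=\frac{1}{2}\|T\|$ and $\Omega(T)=\sqrt{2}\|T\|$ by (\ref{I.1.R.4.2}) and (\ref{111}); one can then show $\Omega(T)=\sqrt{2}\|T\|$ implies $\|T^{2}\|=\|T\|^{2}$ via an approximate-eigenvector argument and finish with the power inequality $w(T^{2})\le w^{2}(T)$ to force $T=0$. Since this is longer and leans on a result outside the paper, I would take the route through $\Omega\le 2w$.
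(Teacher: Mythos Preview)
Your proof is correct but proceeds along a genuinely different path from the paper's. The paper proves (i)~$\Rightarrow$~(ii) by a direct squeeze: for each fixed $\varphi$, it first shows that $\Omega\big({\rm Re}(e^{i\varphi}T)\big)+\Omega\big({\rm Im}(e^{i\varphi}T)\big)=\Omega(T)$ (because the assumption forces equality in the triangle inequality), and then uses the identity $\max\{a,b\}=\tfrac{a+b}{2}+\tfrac{|a-b|}{2}$ together with $w_\Omega(T)\ge\max\{\Omega({\rm Re}(e^{i\varphi}T)),\Omega({\rm Im}(e^{i\varphi}T))\}$ to conclude that the two summands are equal; hence $\Omega(T)=2\Omega({\rm Re}(e^{i\varphi}T))=2\sqrt{2}\|{\rm Re}(e^{i\varphi}T)\|$. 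No quantitative comparison of $\Omega$ and $w$ beyond the ones already stated is used.

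Your route instead isolates and proves the inequality $\Omega(T)\le 2w(T)$ (via polarization and the parallelogram law), which is not in the paper, and then observes that (i), rewritten as $\Omega(T)=2\sqrt{2}\,w(T)$, is incompatible with $\Omega(T)\le 2w(T)$ unless $T=0$, so (ii) follows trivially. The polarization argument is clean and correct. What your approach buys is a sharper insight the paper leaves implicit: both conditions (i) and (ii) are equivalent to $T=0$, so the equality $w_\Omega(T)=\tfrac{1}{2}\Omega(T)$ is never attained by a nonzero operator. The paper's argument, on the other hand, stays entirely within the norm-theoretic framework already set up and avoids introducing an extra inequality; it would also transplant more readily to other self-adjoint norms in place of $\Omega$, where an analogue of $\Omega\le 2w$ may not be available.
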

\begin{proof}
Suppose that $w_{\Omega}(T) = \frac{1}{2}\Omega(T)$.
Let $\varphi\in \mathbb{R}$. We have
\begin{align*}
\Omega(T) &= \Omega(e^{i\varphi}T)
= \Omega\big({\rm Re}(e^{i\varphi}T) + i\,{\rm Im}(e^{i\varphi}T)\big)
\\& \leq \Omega\big({\rm Re}(e^{i\varphi}T)\big) + \Omega\big({\rm Im}(e^{i\varphi}T)\big)
\leq w_{\Omega}(T) + w_{\Omega}(T) = \Omega(T),
\end{align*}
which implies
\begin{align}\label{T.I.1.3.5.2}
\Omega\big({\rm Re}(e^{i\varphi}T)\big) + \Omega\big({\rm Im}(e^{i\varphi}T)\big) = \Omega(T).
\end{align}
Furthermore,
\begin{align*}
\frac{1}{2}\Omega(T) &= w_{\Omega}(T)
\\ &\geq \max\Big\{\Omega\big({\rm Re}(e^{i\varphi}T)\big), \Omega\big({\rm Im}(e^{i\varphi}T)\big)\Big\}
\\& = \frac{\Omega\big({\rm Re}(e^{i\varphi}T)\big) + \Omega\big({\rm Im}(e^{i\varphi}T)\big)}{2}
+ \frac{\Big|\Omega\big({\rm Re}(e^{i\varphi}T)\big) - \Omega\big({\rm Im}(e^{i\varphi}T)\big)\Big|}{2}
\\& \geq \frac{\Omega\Big({\rm Re}(e^{i\varphi}T) + i\,{\rm Im}(e^{i\varphi}T)\Big)}{2}
+ \frac{\Big|\Omega\big({\rm Re}(e^{i\varphi}T)\big) - \Omega\big({\rm Im}(e^{i\varphi}T)\big)\Big|}{2}
\\& = \frac{\Omega(e^{i\varphi}T)}{2} + \frac{\Big|\Omega\big({\rm Re}(e^{i\varphi}T)\big) - \Omega\big({\rm Im}(e^{i\varphi}T)\big)\Big|}{2}
\geq \frac{1}{2}\Omega(T),
\end{align*}
which yields
\begin{align}\label{T.I.2.3.5.2}
\Omega\big({\rm Re}(e^{i\varphi}T)\big) = \Omega\big({\rm Im}(e^{i\varphi}T)\big).
\end{align}
Now, by (\ref{T.I.1.3.5.2}) and (\ref{T.I.2.3.5.2}) we conclude that
$\Omega(T) = 2\Omega\big({\rm Re}(e^{i\varphi}T)\big)$. Thus
$\Omega(T) = 2\sqrt{2}\|\,{\rm Re}(e^{i\theta}T)\|$, because ${\rm Re}(e^{i\varphi}T)$ is self-adjoint.

To prove the converse, let $\Omega(T) = 2\sqrt{2}\|\,{\rm Re}(e^{i\theta}T)\|$ for all $\theta \in \mathbb{R}$.
So, by Proposition \ref{T.7.2.0}, we obtain
\begin{align*}
\frac{1}{2}\Omega(T) = \sqrt{2}\,\displaystyle{\sup_{\theta\in \mathbb{R}}}\,\big\|\,{\rm Re}(e^{i\theta}T)\big\|
= \sqrt{2}w(T) = w_{\Omega}(T),
\end{align*}
and hence $w_{\Omega}(T) = \frac{1}{2}\Omega(T)$.
\end{proof}
In the following theorem, a refinement of the second inequality (\ref{I.1.R.4.2}) is given (see also \cite{D.1}).
\begin{theorem}\label{T.5.2.0}
Let $T\in\mathbb{B}(\mathcal{H})$. Then
\begin{align*}
\Omega(T) \leq  \min\left\{\sqrt{\|TT^* + T^*T\|}, \sqrt{\|T\|^2 + w(T^2)}\right\}.
\end{align*}
\end{theorem}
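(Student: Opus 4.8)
The plan is to argue directly from Dragomir's definition
\[
\Omega(T)=\sup\big\{\|\zeta T+\eta T^*\|:\ \zeta,\eta\in\mathbb{C},\ |\zeta|^2+|\eta|^2\le1\big\},
\]
which, after writing $\|\zeta T+\eta T^*\|=\sup_{\|x\|=1}\|\zeta Tx+\eta T^*x\|$ and squaring, gives
\[
\Omega(T)^2=\sup_{\|x\|=1}\ \sup_{|\zeta|^2+|\eta|^2\le1}\ \|\zeta Tx+\eta T^*x\|^2 .
\]
Expanding,
\[
\|\zeta Tx+\eta T^*x\|^2=|\zeta|^2\|Tx\|^2+|\eta|^2\|T^*x\|^2+2\operatorname{Re}\big(\zeta\overline\eta\,\langle Tx,T^*x\rangle\big),
\]
and the single observation $\langle Tx,T^*x\rangle=\langle T^2x,x\rangle$ is what allows both $\|T\|$ and $w(T^2)$ to enter. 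The whole argument then reduces to estimating this scalar quantity, for a fixed unit vector $x$ and a fixed pair $(\zeta,\eta)$ in the unit ball of $\mathbb{C}^2$, in two different ways, and taking suprema.

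For the bound $\Omega(T)\le\sqrt{\|TT^*+T^*T\|}$ I would apply the Cauchy--Schwarz inequality twice. First $2\operatorname{Re}(\zeta\overline\eta\langle Tx,T^*x\rangle)\le 2|\zeta|\,|\eta|\,\|Tx\|\,\|T^*x\|$, which collapses the expansion into the perfect square $(|\zeta|\,\|Tx\|+|\eta|\,\|T^*x\|)^2$; then
\[
(|\zeta|\,\|Tx\|+|\eta|\,\|T^*x\|)^2\le(|\zeta|^2+|\eta|^2)(\|Tx\|^2+\|T^*x\|^2)\le\|Tx\|^2+\|T^*x\|^2=\big\langle(TT^*+T^*T)x,x\big\rangle\le\|TT^*+T^*T\|,
\]
using $|\zeta|^2+|\eta|^2\le1$ and $\|x\|=1$. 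Passing to the supremum over $x$ and $(\zeta,\eta)$ yields $\Omega(T)^2\le\|TT^*+T^*T\|$.

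For the bound $\Omega(T)\le\sqrt{\|T\|^2+w(T^2)}$ I would instead estimate $\|Tx\|^2,\ \|T^*x\|^2\le\|T\|^2$ and $|\langle Tx,T^*x\rangle|=|\langle T^2x,x\rangle|\le w(T^2)$, so that
\[
\|\zeta Tx+\eta T^*x\|^2\le(|\zeta|^2+|\eta|^2)\|T\|^2+2|\zeta|\,|\eta|\,w(T^2)\le\|T\|^2+w(T^2),
\]
where the last step uses $|\zeta|^2+|\eta|^2\le1$ together with $2|\zeta|\,|\eta|\le|\zeta|^2+|\eta|^2\le1$. Again taking the supremum gives $\Omega(T)^2\le\|T\|^2+w(T^2)$, and combining the two estimates yields $\Omega(T)^2\le\min\{\|TT^*+T^*T\|,\ \|T\|^2+w(T^2)\}$, which is the assertion after taking square roots.

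I do not expect a genuine obstacle here: the only points that need care are recognizing that the cross term equals $\langle T^2x,x\rangle$ (so that $w(T^2)$, not merely $\|T^2\|$, appears) and routing the two Cauchy--Schwarz steps differently in the two cases. It is then worth remarking that, since $\|TT^*+T^*T\|\le\|TT^*\|+\|T^*T\|=2\|T\|^2$ and $w(T^2)\le\|T^2\|\le\|T\|^2$, the inequality refines $\Omega(T)\le\sqrt2\,\|T\|$ from (\ref{I.1.R.4.2}); and, via Proposition~\ref{T.7.2.0} and $w_\Omega(T)\le\Omega(T)$, dividing by $\sqrt2$ recovers and refines (\ref{222}) and (\ref{333}).
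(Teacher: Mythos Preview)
Your proof is correct. For the bound $\Omega(T)\le\sqrt{\|TT^*+T^*T\|}$ your argument and the paper's coincide: both amount to the single Cauchy--Schwarz step $\|\zeta Tz+\eta T^*z\|^2\le(|\zeta|^2+|\eta|^2)(\|Tz\|^2+\|T^*z\|^2)$ followed by $\|Tz\|^2+\|T^*z\|^2=\langle(TT^*+T^*T)z,z\rangle\le\|TT^*+T^*T\|$.

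For the bound $\Omega(T)\le\sqrt{\|T\|^2+w(T^2)}$ your route is genuinely different and more self-contained. You stay with the defining formula for $\Omega(T)$, expand $\|\zeta Tx+\eta T^*x\|^2$, and estimate termwise using $\|Tx\|,\|T^*x\|\le\|T\|$, $|\langle Tx,T^*x\rangle|=|\langle T^2x,x\rangle|\le w(T^2)$, and $2|\zeta|\,|\eta|\le|\zeta|^2+|\eta|^2\le1$. The paper instead passes to the alternative representation (\ref{I.1.R.4.20}),
\[
\Omega(T)=\sup_{\|x\|=\|y\|=1}\sqrt{|\langle Ty,x\rangle|^2+|\langle T^*y,x\rangle|^2},
\]
and invokes a classical inequality from Mitrinovi\'{c}--Pe\v{c}ari\'{c}--Fink, namely $|\langle a,c\rangle|^2+|\langle b,c\rangle|^2\le\|c\|^2\big(\max\{\|a\|^2,\|b\|^2\}+|\langle a,b\rangle|\big)$, with $a=Ty$, $b=T^*y$, $c=x$. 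Your approach has the advantage of avoiding both the external lemma and the dual characterization of $\Omega$; the paper's approach, on the other hand, makes transparent why $\max\{\|Ty\|^2,\|T^*y\|^2\}$ (hence $\|T\|^2$ rather than $2\|T\|^2$) and $|\langle T^2y,y\rangle|$ appear together, and would carry over verbatim to any norm admitting a formula like (\ref{I.1.R.4.20}).
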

\begin{proof}
We use the following inequality
\begin{align}\label{I.3.T.5.2.001}
|\langle a, c\rangle|^2 + |\langle b, c\rangle|^2\leq \|c\|^2\Big(\max\big\{\|a\|^2, \|b\|^2\big\} + |\langle a, b\rangle|\Big),
\end{align}
for any $a, b, c \in \mathcal{H}$ (see, e.g., \cite{M.P.F}).

Set $a = Ty$, $b = T^*y$, $c = x$, $x, y \in \mathcal{H}$, $\|x\|=\|y\|=1$ in (\ref{I.3.T.5.2.001}) to get
\begin{align*}
|\langle Ty, x\rangle|^2 + |\langle T^*y, x\rangle|^2\leq \|x\|^2\Big(\max\big\{\|Ty\|^2, \|T^*y\|^2\big\} + |\langle Ty, T^*y\rangle|\Big).
\end{align*}
Thus
\begin{align}\label{I.3.T.5.2}
\sqrt{|\langle Ty, x\rangle|^2 + |\langle T^*y, x\rangle|^2}\leq \sqrt{\max\big\{\|Ty\|^2, \|T^*y\|^2\big\} + |\langle T^2y, y\rangle|}.
\end{align}
By taking the supremum over $\|x\|= 1$, $\|y\|= 1$ in (\ref{I.3.T.5.2}), we obtain
\begin{align*}
\displaystyle{\sup_{\|x\|=1, \|y\|=1}} \sqrt{|\langle Ty, x\rangle|^2 + |\langle T^*y, x\rangle|^2}
\leq  \sqrt{\|T\|^2 + w(T^2)}.
\end{align*}
So, by (\ref{I.1.R.4.20}) we arrive at
\begin{align}\label{I.4.T.5.2}
\Omega(T) \leq  \sqrt{\|T\|^2 + w(T^2)}.
\end{align}
Now, let $z \in \mathcal{H}$ with $\|z\|=1$. For every $\zeta, \eta \in \mathbb{C}$ with $|\zeta|^2 + |\eta|^2 \leq 1$,
by the Cauchy--Schwarz inequality we have
\begin{align*}
\big\|\zeta Tz + \eta T^*z\big\|
& \leq \sqrt{|\zeta|^2 + |\eta|^2}\sqrt{\|Tz\|^2 + \|T^*z\|^2}
\\& \leq \sqrt{\big\langle (TT^* + T^*T)z, z\big\rangle} \leq \sqrt{\|TT^* + T^*T\|},
\end{align*}
whence
\begin{align}\label{I.2.T.5.2.07}
\big\|\zeta Tz + \eta T^*z\big\| \leq \sqrt{\|TT^* + T^*T\|}.
\end{align}
Taking the supremum over $\|z\|= 1$ in (\ref{I.2.T.5.2.07}), we obtain
\begin{align*}
\big\|\zeta T + \eta T^*\big\| \leq \sqrt{\|TT^* + T^*T\|},
\end{align*}
and so
\begin{align*}
\sup\Big\{\big\|\zeta T + \eta T^*\big\|: \,\, \zeta, \eta \in \mathbb{C}, \, |\zeta|^2 + |\eta|^2 \leq 1\Big\}
\leq \sqrt{\|TT^* + T^*T\|}.
\end{align*}
Hence
\begin{align}\label{I.2.T.5.2.0}
\Omega(T) \leq \sqrt{\|TT^* + T^*T\|}.
\end{align}
Utilizing (\ref{I.4.T.5.2}) and (\ref{I.2.T.5.2.0}) we deduce the desired result.
\end{proof}
The next result refines inequalities (\ref{222}) and (\ref{333}).
\begin{theorem}\label{T.8.2}
Let $T\in\mathbb{B}(\mathcal{H})$. Then
\begin{align*}
w(T) \leq \frac{\sqrt{2}}{2}\Omega(T) \leq \frac{\sqrt{2}}{2}\min\left\{\sqrt{\|TT^* + T^*T\|}, \sqrt{\|T\|^2 + w(T^2)}\right\}.
\end{align*}
\end{theorem}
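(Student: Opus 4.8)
The plan is to assemble facts that are already available, since Theorem~\ref{T.8.2} is essentially a repackaging of Proposition~\ref{T.7.2.0} and Theorem~\ref{T.5.2.0}. For the left inequality, I would start from Proposition~\ref{T.7.2.0}, which gives $w_{\Omega}(T) = \sqrt{2}\,w(T)$. Because $\Omega(\cdot)$ is a self-adjoint norm on $\mathbb{B}(\mathcal{H})$, the upper estimate in the general double inequality~\eqref{666} applies and yields $w_{\Omega}(T) \le \Omega(T)$. Combining the two displays gives $\sqrt{2}\,w(T) \le \Omega(T)$, that is, $w(T) \le \frac{\sqrt{2}}{2}\,\Omega(T)$. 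It is worth flagging that this is the only place the self-adjointness of $\Omega(\cdot)$ enters: it is what licenses the direction $w_N(T) \le N(T)$ of~\eqref{666}.

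For the right inequality, I would simply invoke Theorem~\ref{T.5.2.0}, namely $\Omega(T) \le \min\{\sqrt{\|TT^* + T^*T\|},\, \sqrt{\|T\|^2 + w(T^2)}\}$, and multiply through by the positive constant $\frac{\sqrt{2}}{2}$, which preserves the inequality. Chaining this with the previous step produces the stated three-term estimate.

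Finally I would record the advertised consequence: the rightmost expression equals $\min\{\tfrac{\sqrt 2}{2}\sqrt{\|TT^*+T^*T\|},\ \tfrac{\sqrt 2}{2}\sqrt{\|T\|^2+w(T^2)}\}$, and the two terms inside are precisely Kittaneh's bound~\eqref{222} and Dragomir's bound~\eqref{333}; hence $\tfrac{\sqrt 2}{2}\Omega(T)$ lies below both, so it refines each of~\eqref{222} and~\eqref{333} simultaneously, while still dominating $w(T)$. Since every ingredient has already been proved, there is no real obstacle here; the proof is a short two-step deduction, and the only thing to be careful about is citing the correct (upper) half of~\eqref{666} and noting that it requires $\Omega(\cdot)$ to be self-adjoint, which was observed right after its definition via~\eqref{I.1.R.4.2}.
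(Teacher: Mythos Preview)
Your proposal is correct and follows essentially the same route as the paper: the first inequality is obtained by combining Proposition~\ref{T.7.2.0} with the upper half of~\eqref{666} (using that $\Omega(\cdot)$ is self-adjoint), and the second inequality is Theorem~\ref{T.5.2.0} scaled by $\tfrac{\sqrt{2}}{2}$. The only quibble is bibliographic: the self-adjointness of $\Omega(\cdot)$ is noted immediately after its definition, not via~\eqref{I.1.R.4.2}, but this does not affect the argument.
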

\begin{proof}
The second inequality is deduced from Theorem \ref{T.5.2.0}.
It is therefore enough to prove the first inequality.
Since $\Omega(\cdot)$ is a self-adjoint norm on $\mathbb{B}(\mathcal{H})$, it follows from (\ref{666}) that $w_{\Omega}(T) \leq \Omega(T)$.
On the other hand, by Proposition \ref{T.7.2.0}, $w_{\Omega}(T) = \sqrt{2}w(T)$.
Therefore, $\sqrt{2}w(T)\leq \Omega(T)$, or equivalently, $w(T) \leq \frac{\sqrt{2}}{2}\Omega(T)$.
\end{proof}
As an immediate consequence of Theorem \ref{T.8.2}, we have the following result.
\begin{corollary}\label{C.9.2}
If $T\in\mathbb{B}(\mathcal{H})$ is normal, then $\Omega(T) = \sqrt{2}\|T\|$.
\end{corollary}
\begin{corollary}\label{C.10.2}
Let $T\in\mathbb{B}(\mathcal{H})$. Then the following statements hold.
\begin{itemize}
\item[(i)] If $T$ is normal, then $w_{\Omega}(T) = \Omega(T)$.
\item[(ii)] If $T^2 = 0$, then $w_{\Omega}(T)= \frac{\sqrt{2}}{2}\Omega(T)$.
\end{itemize}
\end{corollary}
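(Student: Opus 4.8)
The plan is to derive both parts directly from the identity $w_{\Omega}(T) = \sqrt{2}\,w(T)$ established in Proposition \ref{T.7.2.0}, combined with the classical sharpness cases of the double inequality $\frac{1}{2}\|T\| \leq w(T) \leq \|T\|$ and the norm identity $\Omega(T) = \sqrt{2}\|T\|$ for self-adjoint $T$ (equivalently, for normal $T$, by Corollary \ref{C.9.2}).

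For part (i), suppose $T$ is normal. Then $w(T) = \|T\|$ by the second equality case in \eqref{111}. Applying Proposition \ref{T.7.2.0} gives $w_{\Omega}(T) = \sqrt{2}\,w(T) = \sqrt{2}\,\|T\|$. On the other hand, Corollary \ref{C.9.2} yields $\Omega(T) = \sqrt{2}\,\|T\|$. Comparing the two expressions, we conclude $w_{\Omega}(T) = \Omega(T)$.

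For part (ii), suppose $T^2 = 0$. Then $w(T) = \frac{1}{2}\|T\|$ by the first equality case in \eqref{111}. By Proposition \ref{T.7.2.0}, $w_{\Omega}(T) = \sqrt{2}\,w(T) = \frac{\sqrt{2}}{2}\|T\|$. It remains to relate $\|T\|$ to $\Omega(T)$ in this situation; the clean route is to invoke Theorem \ref{T.5.2.0}, which for a square-zero operator gives $\Omega(T) \leq \sqrt{\|T\|^2 + w(T^2)} = \sqrt{\|T\|^2 + 0} = \|T\|$, while the left inequality of \eqref{I.1.R.4.2} already gives $\|T\| \leq \Omega(T)$. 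Hence $\Omega(T) = \|T\|$, and therefore $w_{\Omega}(T) = \frac{\sqrt{2}}{2}\|T\| = \frac{\sqrt{2}}{2}\Omega(T)$.

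The only mild subtlety is part (ii): one must observe that a square-zero operator forces $w(T^2) = 0$ in Theorem \ref{T.5.2.0} to pin down $\Omega(T) = \|T\|$ exactly, rather than merely bounding it; everything else is a direct substitution into Proposition \ref{T.7.2.0}. No new estimates are needed, so there is no real obstacle — the work is entirely in assembling the equality cases already available in the excerpt.
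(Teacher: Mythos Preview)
Your proof is correct and follows essentially the same approach as the paper: for (i) you combine Proposition~\ref{T.7.2.0} with $w(T)=\|T\|$ and Corollary~\ref{C.9.2}, exactly as the paper does; for (ii) you use $w(T)=\tfrac{1}{2}\|T\|$ and pin down $\Omega(T)=\|T\|$ via the bound $\Omega(T)\leq\sqrt{\|T\|^2+w(T^2)}$ from Theorem~\ref{T.5.2.0} together with \eqref{I.1.R.4.2}, which is precisely the paper's argument (the paper cites the intermediate inequality \eqref{I.4.T.5.2} directly rather than the packaged Theorem~\ref{T.5.2.0}, but the content is identical).
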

\begin{proof}
(i) Let $T$ be normal. Hence $w(T) = \|T\|$.
Furthermore, by Corollary \ref{C.9.2} we have $\Omega(T) = \sqrt{2}\|T\|$.
Therefore, by Proposition \ref{T.7.2.0}, we conclude that
$w_{\Omega}(T) = \sqrt{2}w(T) = \sqrt{2}\|T\| = \Omega(T)$.

(ii) Let $T^2 = 0$. Thus $w(T) = \frac{1}{2}\|T\|$.
Also, from (\ref{I.1.R.4.2}) and (\ref{I.4.T.5.2}) it follows that $\Omega(T) = \|T\|$.
Hence $w_{\Omega}(T) = \sqrt{2}w(T) = \frac{\sqrt{2}}{2}\Omega(T)$.
\end{proof}
We finish this section by applying our results to obtain some inequalities for Hilbert--Schmidt operators.
Recall that an operator $T\in\mathbb{B}(\mathcal{H})$ is said to belong to the Hilbert--Schmidt class
$\mathcal{C}_2(\mathcal{H})$ if
$\sum_{i = 1}^\infty \|Te_i\|^2 < \infty$, for any orthonormal basis $\{e_i\}^\infty_{i = 1}$ for $\mathcal{H}$.
For $T\in\mathcal{C}_2(\mathcal{H})$, let $\|T\|_2 = \Big(\sum_{i = 1}^\infty \|Te_i\|^2\Big)^{\frac{1}{2}}$
be the Hilbert--Schmidt norm of $T$. It is easy to check that $\|T^*\|_2 = \|T\|_2$
and $\|TS\|_2 \leq \|T\|_2 \|S\|_2$ for all $T, S\in\mathcal{C}_2(\mathcal{H})$,
and so $\|\cdot\|_2$ is a self-adjoint algebra norm on $\mathcal{C}_2(\mathcal{H})$.
When $N(\cdot)$ is the Hilbert--Schmidt norm $\|\cdot\|_2$, the norm $w_N(\cdot)$ is denoted
by $w_2(\cdot)$. For $T\in\mathcal{C}_2(\mathcal{H})$, it was proved in \cite[Theorem 7]{A.K.2} that
\begin{align}\label{I.1.R.7.3}
w_2(T) = \frac{1}{2} \|T\|^2_2 + \frac{1}{2}|{\rm tr}(T^2)|,
\end{align}
where the symbol ${\rm tr}$ denotes the trace functional.

Now, by Theorem \ref{T.5.3} (applied for $N(\cdot) = \|\cdot\|_2$), (\ref{3.00})
and the identity (\ref{I.1.R.7.3}), we achieve our final result.
\begin{corollary}\label{C.4.5.3}
For $T, S\in \mathcal{C}_2(\mathcal{H})$ the following statements hold.
\begin{itemize}
\item[(i)] \begingroup\makeatletter\def\f@size{10}\check@mathfonts
${\big\|TT^* + T^*T\big\|}_2 + \displaystyle{\sup_{\varphi \in \mathbb{R}}}\Big|{\rm tr}\big((e^{i\varphi}T)^2 + (e^{-i\varphi}T^*)^2\big)\Big|
\leq 2\big(\|T\|^2_2 + |{\rm tr}(T^2)|\big).$
\endgroup
\item[(ii)] \begingroup\makeatletter\def\f@size{10}\check@mathfonts
$\|TS\|^2_2 + \big|{\rm tr}(TS)^2\big|
\leq 4\min\Big\{\|T\|^2_2\big(\|S\|^2_2 + |{\rm tr}(S^2)|\big), \|S\|^2_2\big(\|T\|^2_2 + |{\rm tr}(T^2)|\big)\Big\}.$
\endgroup
\end{itemize}
\end{corollary}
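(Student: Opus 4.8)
The plan is to specialize the general inequalities already proved to the Hilbert--Schmidt norm $N(\cdot) = \|\cdot\|_2$, which is a self-adjoint algebra norm on $\mathcal{C}_2(\mathcal{H})$, and then to replace every occurrence of $w_2$ by the closed-form expression \eqref{I.1.R.7.3}. Part~(i) will come from Theorem~\ref{T.5.3}; part~(ii) from the chain \eqref{3.00} followed by Theorem~\ref{T.3.2}, or more directly from the first inequality in Theorem~\ref{T.3.2} combined with \eqref{I.1.R.7.3}.

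For part~(i), I would start from Theorem~\ref{T.5.3} applied with $N(\cdot)=\|\cdot\|_2$:
\[
\frac{\|TT^*+T^*T\|_2}{4} + \frac{1}{2}\sup_{\varphi\in\mathbb{R}}\Bigl|\|{\rm Re}(e^{i\varphi}T)\|_2^2 - \|{\rm Im}(e^{i\varphi}T)\|_2^2\Bigr| \le w_2^2(T).
\]
The right-hand side is $\bigl(\tfrac12\|T\|_2^2 + \tfrac12|{\rm tr}(T^2)|\bigr)$ by \eqref{I.1.R.7.3} (note that $w_2(T)$ already denotes the number on the left of \eqref{I.1.R.7.3}, so there is nothing to square). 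For the second term on the left I would compute, using $\|A\|_2^2 = {\rm tr}(A^*A)$ for self-adjoint $A$ (so $\|A\|_2^2={\rm tr}(A^2)$), that
\[
\|{\rm Re}(e^{i\varphi}T)\|_2^2 - \|{\rm Im}(e^{i\varphi}T)\|_2^2 = {\rm tr}\bigl({\rm Re}(e^{i\varphi}T)^2\bigr) - {\rm tr}\bigl({\rm Im}(e^{i\varphi}T)^2\bigr).
\]
Expanding ${\rm Re}(e^{i\varphi}T) = \tfrac12(e^{i\varphi}T + e^{-i\varphi}T^*)$ and ${\rm Im}(e^{i\varphi}T) = \tfrac1{2i}(e^{i\varphi}T - e^{-i\varphi}T^*)$, the cross terms ${\rm tr}(TT^*)$ and ${\rm tr}(T^*T)$ cancel in the difference, and one is left with $\tfrac12\,{\rm tr}\bigl((e^{i\varphi}T)^2 + (e^{-i\varphi}T^*)^2\bigr)$. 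Substituting and multiplying through by $4$ yields exactly the stated inequality~(i).

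For part~(ii), I would invoke the first inequality in Theorem~\ref{T.3.2} with $N(\cdot)=\|\cdot\|_2$, giving
\[
w_2(TS) \le \min\Bigl\{\|T\|_2\, w_2(S) + \tfrac12 w_2(TS\pm ST^*),\ \|S\|_2\, w_2(T) + \tfrac12 w_2(TS\pm S^*T)\Bigr\} \le 2\min\{\|T\|_2^2 w_2(S),\ \|S\|_2^2 w_2(T)\},
\]
where the last step uses \eqref{666} in the form $w_2(\cdot)\le\|\cdot\|_2$ applied to $T$ or $S$ as appropriate; actually the cleanest route is the second inequality of Theorem~\ref{T.3.2}, $w_2(TS)\le 2\min\{\|T\|_2 w_2(S),\|S\|_2 w_2(T)\}$, but since here $w_2$ already has the form \eqref{I.1.R.7.3} I will double-check the placement of the squares. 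Then replace $w_2(TS)$ by $\tfrac12\|TS\|_2^2 + \tfrac12|{\rm tr}((TS)^2)|$ via \eqref{I.1.R.7.3}, replace $w_2(S)$ by $\tfrac12\|S\|_2^2 + \tfrac12|{\rm tr}(S^2)|$ and likewise $w_2(T)$, and clear the factor $\tfrac12$ by multiplying by $2$; this gives inequality~(ii) with the constant $4$ as stated (the $2$ from Theorem~\ref{T.3.2} times the $2$ from clearing the $\tfrac12$).

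The routine-but-careful part is the bookkeeping in part~(i): verifying that the Hilbert--Schmidt inner-product identities ${\rm tr}(AB)={\rm tr}(BA)$ and $\|A^*\|_2=\|A\|_2$ make precisely the $TT^*+T^*T$ terms survive in $\|{\rm Re}(e^{i\varphi}T)\|_2^2+\|{\rm Im}(e^{i\varphi}T)\|_2^2$ (recovering $\tfrac12(TT^*+T^*T)$, consistent with \eqref{I.1.T.5.3}) while only the $T^2$ and $(T^*)^2$ terms survive in the difference. I expect the main obstacle to be nothing deep but rather keeping track of the factors of $2$ and $\tfrac1i$ in the Cartesian decomposition of $e^{i\varphi}T$ so that the final constants match exactly; no new idea beyond Theorems~\ref{T.5.3} and~\ref{T.3.2}, the algebra-norm property of $\|\cdot\|_2$, and the trace identity \eqref{I.1.R.7.3} is needed.
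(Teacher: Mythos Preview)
Your approach is correct and is exactly the one the paper indicates: Theorem~\ref{T.5.3} for part~(i), inequality~\eqref{3.00} (equivalently the second inequality of Theorem~\ref{T.3.2}) for part~(ii), combined with the identity~\eqref{I.1.R.7.3}. Note that \eqref{I.1.R.7.3} as printed should be read as $w_2^2(T)=\tfrac12\|T\|_2^2+\tfrac12|{\rm tr}(T^2)|$ (a homogeneity check forces this), which is what your parenthetical is groping toward; with that reading your constant-tracking and the trace computation $\|{\rm Re}(e^{i\varphi}T)\|_2^2-\|{\rm Im}(e^{i\varphi}T)\|_2^2=\tfrac12\,{\rm tr}\bigl((e^{i\varphi}T)^2+(e^{-i\varphi}T^*)^2\bigr)$ are exactly right.
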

\textbf{Acknowledgement.}
Supported by a grant from Shanghai Municipal Science and Technology Commission (18590745200).

\bibliographystyle{amsplain}

\end{document}